\documentclass[12pt,a4paper]{amsart}
\usepackage[utf8]{inputenc}
\usepackage{tikz}
\usepackage[english]{babel}
\usepackage[T1]{fontenc}
\usepackage{amsmath}
\usepackage{amsfonts}
\usepackage{amssymb}
\usepackage{graphicx}
\usepackage[left=2.5cm,right=2.5cm,top=3cm,bottom=2.5cm]{geometry}
\usepackage{ulem}
\usepackage{color}
\newtheorem{defi}{Definition}[section]
\newtheorem{thm}{Theorem}[section]

\newtheorem{rem}{Remark}[section]

 \newtheorem{prop}{Proposition}[section]
\newtheorem{lem}{Lemma}[section]
\newtheorem{theorem}{Theorem}[section]

\newtheorem{conj}[theorem]{Conjecture}
\newtheorem{opn}[defi]{Open Problem}
\newtheorem{lemma}[theorem]{Lemma}

\newtheorem{cor}{Corollary}[section]

\usepackage{color}

\begin{document}

 \title [Fundamental spectral gap] {On the fundamental spectral gap of weighted Schr\"{o}dinger operators }

\address{ \newline
Team of Modeling and Scientific Computing, 
Department of Mathematics, 
Multidisciplinary Faculty of Nador, 
University of Mohammed First, Morocco}
\email{m.ahrami@ump.ac.ma}



\subjclass[2010]{34L15, 34B27, 35J60, 35B05.}

\keywords{Fundamental spectral gap, weighted Schr\"{o}dinger operators, convex potential, concave weight, Dirichlet boundary conditions }


\author{Mohammed Ahrami}
 \maketitle


 \maketitle

\begin{abstract}
We characterize the potential $V(x)$ that minimizes the fundamental spectral gap of weighted Schr\"{o}dinger operators on the interval $[0,\pi]$ subject to Dirichlet boundary conditions, under the constraint that the potential $V(x)$ is convex and that the weight function $w(x)$ is concave.

\end{abstract}



\section{Introduction}


In this article, we study the minimization problem of the {\it fundamental eigenvalue gap}
$\Gamma := \lambda_2 - \lambda_1$ of weighted Schr\"{o}dinger operators

\begin{equation}\label{ePnoP}
H(V,w) u :=  -u^{\prime \prime} +V(x) u  = \lambda  w(x) u,
\quad\quad x\in [0,\pi].
\end{equation}
on a finite interval $[0,\pi]$ with Dirichlet boundary conditions, i.e., $u(0) = u(\pi) = 0$.

First, we provide some motivation for studying the fundamental gap and the importance of its bounds. There are both physical and mathematical reasons to study the fundamental gap. For example, in quantum mechanics, the fundamental gap represents
the energy needed to achieve the first excited state from the ground state of the
particle described by the considered Schr\"odinger operators, the so-called excitation
energy. The fundamental gap is also of importance in quantum field theory and 
statistical mechanics. To give an example, Van den Berg in \cite{Berg} applied gap 
results of the Laplace operator to provide sufficient conditions for a free boson gas 
to fill the ground state alone under the thermodynamic limit macroscopically. 

 The optimal estimates of the fundamental spectral gap of Schr\"odinger operators have received considerable attention in the last decades, and have been extensively studied since the 1980s; cf. \cite{Ash1, Ash2,Ash3, Ash4, ahrami, Ahrami3, Horvath1,Horvath2,Lavine} with most prior works assuming that $w(x)=1$.

 In the unweighted case,
where \(w(x)=1\), classical results of Ashbaugh and Benguria \cite{Ash1} established
sharp lower bounds for the fundamental gap under symmetry and single-well
assumptions on the potential. Later, Lavine \cite{Lavine} proved that the constant potential minimizes the fundamental gap among
all convex potentials and satisfies the lower bound

\begin{equation}\label{conjecture1}
 \Gamma\geqslant \frac{3\pi^{2}}{l^{2}} 
\end{equation}

\noindent where $l$ denotes the length of the interval.

This result revealed a remarkable rigidity phenomenon
for Schr\"odinger operators with convex potentials and became one of the central results in one-dimensional
spectral gap theory.

Some time later, in \cite{Andrews}, Andrews and Clutterbuck showed that the
fundamental gap of a Dirichlet Schr{\"o}dinger operator with semi-convex potential $q$ on a bounded convex domain $\Omega\in \mathbb{R}^{d}$ with diameter $D$ satisfies the estimate \eqref{conjecture1}, thus proving the so-called fundamental gap conjecture in essentially full generality. 

For graphs, Ahrami et al. \cite{AHEK} proved an interesting phenomenon: on a metric tree graph the optimal
potential for Schr\"odinger operators with convex potentials must be piecewise affine and there are graphs where the constant potential
does not minimize.

More recently, Ahrami et al. \cite{AHRA} studied the problem (\ref{ePnoP}) for single-well potentials and single-barrier weights and proved that for weighted Schr\"{o}dinger operators, both the optimal potential and the optimal weight are step functions with a specific characterization.

Our goal is to determine
whether the classical phenomenon discovered by Lavine
persists in the weighted setting, namely whether constant potentials continue to minimize the fundamental gap in the presence of nontrivial weight. The presence of a nontrivial weight significantly changes the spectral structure of the problem and introduces new interactions between the geometry
of the potential and the inhomogeneity of the medium.

Our first result shows that any minimizing convex potential can be reduced to an affine potential without increasing the spectral gap. Similarly, every concave weight can be reduced to an affine weight. This reduction is obtained through a perturbative argument based on the Feynman-Hellmann formula together with qualitative properties of the first two eigenfunctions. We then analyze affine weights in greater detail and prove that, under a suitable quantitative condition on the weight,
\[p > 5m\pi,
\]
where \(w(x)=mx+p\), any gap-minimizing convex potential must necessarily be constant.


Compared with the previous work of Ahrami et al. \cite{AHRA} on single-well potentials and single-barrier functions, the present paper
reveals a fundamentally different optimization mechanism. In the
single-well setting, the minimizers are typically discontinuous step
functions. Furthermore, in this paper the
optimization procedure reduces the problem to affine structures and eventually
recovers constant potentials under suitable quantitative assumptions on the
weight. Thus, the present work establishes a new bridge between the direct
optimization developed in \cite{ElAHar,
AHRA} and Lavine's result \cite{Lavine}.

The paper is organized as follows. In Section 2, we recall several preliminary properties of the fundamental eigenvalue gap, including a Feynman-Hellmann formula for eigenvalue variations and results on the monotonicity of the eigenfunction ratio $\frac{u_2}{u_1}$. In Section 3 we then establish in Corollary \ref{existence} general existence results by invoking compactness of the sets of functions with the properties of interest,
which in turn follow from a general argument using either the Blaschke selection principle. With these results in hand, we turn to characterizing the optimizers in Theorem \ref{step} and finally in Theorem \ref{step2}, we prove that for sufficiently large affine weights, the minimizing convex potential is necessarily constant.

\section{Preliminaries and basics}
Throughout the paper, we study the weighted Schr\"{o}dinger eigenvalue problem
\begin{equation*}\label{eq}
H(V,w) u :=  -u^{\prime \prime} +V(x) u  = \lambda  w(x) u
\end{equation*}

subject to Dirichlet boundary conditions $u(0)=u(\pi)=0$
with a measurable bounded potential $V:[0,\pi] \longrightarrow
 \mathbb{R}$ and measurable bounded weight $w:[0,\pi] \longrightarrow
 \mathbb{R^+}$.

\noindent Under the above assumptions on $V$ and $w$, the operator $H(V,w)$ is self-adjoint with compact resolvent and therefore possesses a purely discrete spectrum; see, e.g., \cite{AHRA,Wei}.


\noindent We denote the eigenvalues of $H$ by

$$
0<\lambda_1(V, w)<\lambda_2(V, w)<\lambda_3(V, w)<...
$$
indicating the dependence on the potential $V$ and the weight $w$. Furthermore, all eigenvalues are simple; see, e.g., \cite{Zett, Wei}.

The main object of interest in this paper is the fundamental gap $\Gamma[V, w]$ defined by  

 $$\Gamma[V, w] = \lambda_2(V, w)- \lambda_1(V, w).$$
It follows immediately from the definition that the fundamental gap of a weighted Schr\"{o}dinger operators subject to
Dirichlet boundary conditions on an interval $[0,\pi]$ is invariant under the addition of a constant, i.e.,
$ \Gamma[V+c,w]= \Gamma[V,w]$.
As a consequence of this remark, which will be very important later on, if we have 
to deal with affine potentials, we only need to consider potentials of the form
$V(x)=ax $  with $a>0$. 

We recall several standard facts from spectral gap theory; see for instance \cite{Ash1,Lavine,ElAHar}.



\begin{lem}\label{F-H}
Suppose that $V(.,\kappa)$ and $w(.,\kappa)$ are one-parameter families of real-valued, locally $L^1$ functions, \( C^1 \) in \( \kappa \) for \( \kappa \) in some real interval, with
$\inf_{x} V(x,\kappa) > - \infty$, $C \ge w(x,\kappa) \ge \frac 1 C$ for some $C > 0$, and
$\frac{\partial V}{\partial \kappa}(x,\kappa)$
and $\frac{\partial w}{\partial \kappa}(x,\kappa) \in L^1(0, \pi)$. Without loss of generality we standardize the first two normalized eigenfunctions $u_{1,2}(x)$ associated with 
$\lambda_{1,2}$
so that
$u_{1,2}(x) > 0$ for $0 < x < \epsilon$ for some $\epsilon$.  Then
\begin{enumerate}
\item \[
\frac{d\lambda_{n}(\kappa)}{d\kappa}=-\lambda_{n}\int_{0}^{\pi}\frac{\partial w}{\partial \kappa}(x,\kappa)u_{n}^{2}(x,\kappa)dx+\int_{0}^{\pi}\frac{\partial V}{\partial \kappa}(x,\kappa)u_{n}^{2}(x,\kappa)dx .
\]
\item
$\dfrac{u_{2}}{u_{1}} $ is decreasing on $(0,\pi)$.

\item The equation $ \vert u_{1}(x)\vert=\vert u_{2}(x)\vert $ has
either one or two solutions on $(0,\pi)$.
 \item There exist two points $x_{-}$ and $x_{+}$,
  $ 0 \leq x_{-}< x_{+} \leq \pi $, at least one of which is interior to $(0, \pi)$, such that 
$u_{1}^{2}(x)>u_{2}^{2}(x)$ on $(x_{-},x_{+})$ and $u_{1}^{2}(x)\le u_{2}^{2}(x)$ on $(x_{-},x_{+})^c$.
\item
The equation $ \lambda_1 \vert u_{1}^2(x)\vert=\lambda_2 \vert u_{2}^2(x)\vert $ has
either one or two
solutions on $(0,\pi)$.
 \item 
There exist two points $\widehat{x}_{-}$ and $\widehat{x}_{+}$,
  $ 0 \leq \widehat{x}_{-}< \widehat{x}_{+} \leq \pi $, at least one of which is interior to $(0, \pi)$, such that 
$\lambda_1 u_{1}^{2}(x)> \lambda_2 u_{2}^{2}(x)$ on $(\widehat{x}_{-},\widehat{x}_{+})$ and $\lambda_1 u_{1}^{2}(x)\le \lambda_2 u_{2}^{2}(x)$ on $(\widehat{x}_{-},\widehat{x}_{+})^c$.
\end{enumerate}

\end{lem}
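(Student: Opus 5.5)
The plan is to treat item (1) by a standard perturbation computation and items (2)--(6) by a single Wronskian argument. Throughout, the eigenfunctions are normalized in the weighted sense, $\int_0^\pi w u_n^2\,dx=1$; this is the normalization under which formula (1) holds as stated.

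For (1), I first justify that $\kappa\mapsto(\lambda_n(\kappa),u_n(\cdot,\kappa))$ is $C^1$. The eigenvalues are simple, and $H(V,w)$ depends continuously differentiably on $\kappa$ in the sense of quadratic forms, because $V_\kappa, w_\kappa\in L^1$ and $H^1_0$ embeds in $L^\infty$. Kato's perturbation theory for a simple isolated eigenvalue, or the implicit function theorem applied to the map $(u,\lambda)\mapsto(-u''+Vu-\lambda wu,\ \int w u^2-1)$, then gives the regularity. Next I differentiate $-u_n''+Vu_n=\lambda_n w u_n$ in $\kappa$, multiply by $u_n$ and integrate. After two integrations by parts, whose boundary terms vanish because $\dot u_n(0)=\dot u_n(\pi)=0$, the terms containing $\dot u_n$ cancel by the eigen-equation. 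What remains is
\[
\int V_\kappa u_n^2 = \dot\lambda_n\int w u_n^2+\lambda_n\int w_\kappa u_n^2 ,
\]
and the normalization gives the formula. I expect this regularity justification under the weak $L^1$ hypotheses to be the main technical obstacle; the algebra itself is routine.

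For (2), set $W=u_1u_2'-u_2u_1'$, so that $(u_2/u_1)'=W/u_1^2$. Using the two equations, $W'=u_1u_2''-u_2u_1''=-(\lambda_2-\lambda_1)\,w\,u_1u_2$, and $W(0)=W(\pi)=0$. Let $z$ be the unique interior zero of $u_2$. With the sign convention $u_1>0$ and $u_2>0$ near $0$, we have $u_1u_2>0$ on $(0,z)$ and $u_1u_2<0$ on $(z,\pi)$. Hence $W$ strictly decreases on $(0,z)$ and strictly increases on $(z,\pi)$. Since $W$ vanishes at both endpoints, $W<0$ on $(0,\pi)$, and so $u_2/u_1$ is strictly decreasing. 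By uniqueness for the ODE, $u_1'(0),u_2'(0)$ and $u_1'(\pi),u_2'(\pi)$ are nonzero, so $r:=u_2/u_1$ extends continuously to $[0,\pi]$, with $r(0)=u_2'(0)/u_1'(0)>0>r(\pi)$.

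For (3) and (4), note that $|u_1|=|u_2|$ exactly when $r=\pm1$. By strict monotonicity, each of the values $1$ and $-1$ is attained at most once, so there are at most two solutions. To get at least one, I use $\int w(u_1^2-u_2^2)=0$ with $w>0$: the continuous function $u_1^2-u_2^2$ must change sign. The set where $u_1^2>u_2^2$ is $\{-1<r<1\}$, which is an interval $(x_-,x_+)$ by monotonicity, and its complement is where $u_1^2\le u_2^2$. At least one endpoint is interior, because otherwise $u_1^2>u_2^2$ on all of $(0,\pi)$, contradicting the change of sign.

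For (5) and (6), I repeat the argument with the threshold $\theta=\sqrt{\lambda_1/\lambda_2}\in(0,1)$, that is, with the condition $|r|=\theta$. Monotonicity again gives at most two solutions and identifies $\{\lambda_1u_1^2>\lambda_2u_2^2\}=\{|r|<\theta\}$ as an interval $(\widehat x_-,\widehat x_+)$. This interval is nonempty because it contains the node $z$, where $r=0$. The complement is nonempty as well, because $\int w(\lambda_1u_1^2-\lambda_2u_2^2)=\lambda_1-\lambda_2<0$. So at least one endpoint is interior, and there is at least one solution.
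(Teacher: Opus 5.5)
Your proof is correct and is the standard argument behind the result that the paper simply cites from \cite{AHRA}: you derive Feynman--Hellmann by differentiating the eigen-equation under the weighted normalization $\int_0^\pi w u_n^2\,dx=1$, and the Wronskian identity $W'=-(\lambda_2-\lambda_1)\,w\,u_1u_2$ with $W(0)=W(\pi)=0$ gives strict monotonicity of $u_2/u_1$, from which all the level-set statements follow. The one point to make explicit is that $\theta=\sqrt{\lambda_1/\lambda_2}\in(0,1)$ in (5)--(6) needs $\lambda_1>0$, which does not follow from $\inf_x V>-\infty$ but is the paper's standing convention $0<\lambda_1<\lambda_2$; it is genuinely needed, since for $\lambda_1\le 0<\lambda_2$ the set $\{\lambda_1u_1^2>\lambda_2u_2^2\}$ is empty and (6) fails.
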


\noindent
{For the proof,  see \cite{AHRA}.}

\begin{lem}\label{G'''}
Let $G:[0,\pi]\to\mathbb{R}$ be a function such that $G^{''}(x)$ is absolutely continuous. If $u$ satisfies \eqref{ePnoP}, where $V(x),w(x)$ are differentiable, then
\begin{align*}
G(\pi)[u'^{2}(\pi)&+(\lambda w(\pi)-V(\pi))u^{2}(\pi)]-G(0)[u'^{2}(0)+(\lambda w(0)-V(0))u^{2}(0)]\\
&\quad\quad\quad\quad+ \dfrac{1}{2}[G''(\pi)u^{2}(\pi)-G''(0)u^{2}(0)]\\
&=\int_{0}^{\pi}\left[2G'(\lambda w-V)+G(\lambda w'-V')+\frac{G'''}{2}\right]
u^{2}(x)dx.
\end{align*}
\end{lem}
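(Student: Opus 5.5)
The identity follows from a pointwise identity for the derivative of a suitable combination of $u$, $u'$ and the coefficients, integrated over $[0,\pi]$. Set $Q(x):=\lambda w(x)-V(x)$, so that \eqref{ePnoP} reads $u''=-Qu$. Consider
\[
F(x):=G\bigl[u'^{2}+Qu^{2}\bigr]-G'uu'+\tfrac12 G''u^{2},
\]
and aim to show
\[
F'=\Bigl[2G'Q+GQ'+\tfrac{G'''}{2}\Bigr]u^{2}.
\]
Integrating from $0$ to $\pi$ then gives the claim, once the Dirichlet conditions are used.

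First, compute $F'$ term by term using $u''=-Qu$:
\begin{itemize}
\item $\frac{d}{dx}\bigl(G(u'^2+Qu^2)\bigr)=G'(u'^2+Qu^2)+G\bigl(2u'u''+Q'u^2+2Quu'\bigr)=G'u'^2+G'Qu^2+GQ'u^2$, because $2u'u''+2Quu'=0$.
\item $\frac{d}{dx}(-G'uu')=-G''uu'-G'u'^2-G'uu''=-G''uu'-G'u'^2+G'Qu^2$.
\item $\frac{d}{dx}\bigl(\tfrac12G''u^2\bigr)=\tfrac12G'''u^2+G''uu'$.
\end{itemize}
Summing, the $G'u'^2$ and $G''uu'$ terms cancel and leave $F'=(2G'Q+GQ'+\tfrac12G''')u^2$, as desired. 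The hypotheses justify this: $G''$ absolutely continuous makes $G'''$ exist a.e. and integrable, and differentiability of $V,w$ gives $Q'$. Since $u\in C^1$ with $u'$ absolutely continuous, $F$ is absolutely continuous, and the fundamental theorem of calculus applies.

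Second, evaluate $F(\pi)-F(0)$. The term $G'uu'$ vanishes at both endpoints because $u(0)=u(\pi)=0$. This matches the stated left-hand side, which keeps the (in fact zero) terms $Qu^2$ and $\tfrac12G''u^2$ at the endpoints in the form written. Note that the identity as stated omits a $-G'uu'$ boundary term, so the argument genuinely relies on the Dirichlet conditions, or else that term must be added.

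The main obstacle is guessing the correct combination $F$, specifically the corrector $-G'uu'+\tfrac12G''u^2$ that cancels the $G'u'^2$ and $G''uu'$ terms. Once $F$ is found, everything is routine differentiation, together with a remark on the regularity needed to integrate $F'$.
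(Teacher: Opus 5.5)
Your proof is correct and is essentially the paper's argument. The paper integrates $\frac{d}{dx}\bigl[G(u'^2+Qu^2)\bigr]=G'(u'^2+Qu^2)+GQ'u^2$, and separately uses $\int_0^\pi (G'uu')'\,dx=0$ (this is where the Dirichlet conditions enter) together with integrating $G''uu'$ by parts. That is exactly your exact derivative of $F=G(u'^2+Qu^2)-G'uu'+\tfrac12G''u^2$, split into two pieces.

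One small caveat, shared with the paper's hypothesis: absolute continuity of $F$, specifically of the term $GQu^2$, needs $V,w$ absolutely continuous rather than merely differentiable. This is harmless for the affine $V,w$ where the lemma is actually used.
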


\begin{proof}  
We have
\begin{align*}
G(\pi)[u'^{2}(\pi)&+(\lambda w(\pi)-V(\pi))u^{2}(\pi)]-G(0)[u'^{2}(0)+(\lambda w(0)-V(0))u^{2}(0)]\\
 &=\int_{0}^{\pi}\frac{dG(x)}{dx}[u'^{2}(x)+(\lambda w(x)-V(x))u^{2}(x)]dx  \\
&=\int_{0}^{\pi}G'[u'^{2}+(\lambda w-V)u^{2}]+G[2u'u''+(\lambda w'-V')u^{2}+2(\lambda w-V)uu']dx.\\
&=\int_{0}^{\pi}G'[u'^{2}+(\lambda w-V)u^{2}]+G[-2(\lambda w-V)uu'+(\lambda w'-V')u^{2}+2(\lambda w-V)uu']dx\\
&=\int_{0}^{\pi}G'[u'^{2}+(\lambda w-V)u^{2}]+G(\lambda w'-V')u^{2}dx
\end{align*}
and 
\begin{align*}
0 &=\int_{0}^{\pi}(G'uu')'dx\\
&=\int_{0}^{\pi}(G''uu'+G'u'^{2}+G'uu'')dx\\
&=\int_{0}^{\pi}[G''uu'+G'u'^{2}-G'(\lambda w-V)u^{2}]dx\\
&=\frac{1}{2}[G''(\pi)u^{2}(\pi)-G''(0)u^{2}(0)]-\frac{1}{2}\int_{0}^{\pi}G'''u^{2}dx-\int_{0}^{\pi}G'(\lambda w-V)u^{2}dx+\int_{0}^{\pi}G'u'^{2}dx.
\end{align*}
(Since $G''$ is assumed absolutely continuous $G'''$ is defined a.e. and integration by parts is valid.)  Thus
$$\frac{1}{2}[G''(\pi)u^{2}(\pi)-G''(0)u^{2}(0)]=\int_{0}^{\pi}
\left[\frac{G'''u^{2}}{2}+G'(\lambda w-V)u^{2}-G'u'^{2}\right]dx.
$$
\noindent Combining the previous identities yields the desired formula
 $$ G(\pi)[u'^{2}(\pi)+(\lambda w(\pi)-V(\pi))u^{2}(\pi)]-G(0)[u'^{2}(0)+(\lambda w(0)-V(0))u^{2}(0)]$$
$$ + \dfrac{1}{2}[G''(\pi)u^{2}(\pi)-G''(0)u^{2}(0)]$$
$$=\int_{0}^{\pi}\left[\frac{G'''u^{2}}{2}+G'(\lambda w-V)u^{2}-G'u'^{2}+G(\lambda w'-V')u^{2}+G'((\lambda w-V)u^{2}+u'^{2})\right]dx. $$
\end{proof}




\section{Characterization of optimizers}
 In this section, we look at the explicit form of the minimizing potential of the fundamental spectral gap for weighted Schr\"{o}dinger operators under Dirichlet boundary conditions {\eqref{ePnoP}} 
{closely following the strategy in \cite{ElAHar}.}

\subsection{The class of convex potentials and concave weight functions}

 We define here several classes of functions that will play a role below.

\begin{defi}

For $1<M \le \infty$ and $0 < N_{<,>} < \infty $, let
$$ \mathcal{C}_M=\lbrace V(x): 0\le V(x)\le M : V(x) 
\textrm{ is convex on }[0,\pi] \rbrace .$$
and
$$ \mathcal{S}_{N_{<,>}}=\lbrace w(x):  N_< \leq w(x) \leq N_>: w(x) 
\textrm{ is concave on }[0,\pi] \rbrace .$$
\end{defi}

\begin{defi}
Consider the weighted Schr\"{o}dinger operators with Dirichlet boundary conditions
\begin{align}\label{s-l}
    {-u''+V(x)u=\lambda w(x)u}&\\
    u(0)=u(\pi)=0&,\nonumber
\end{align}                                                   
where $V$ is a bounded measurable convex function and $w$ is a bounded measurable concave density.
If there exist $V_{*} \in \mathcal{C}_M$ and 
$w_{*} \in  \mathcal{S}_{N_{<,>}}$ such that
\[
\Gamma(V_{*},w_{*})= \inf \lbrace
  \Gamma(V,w),V \in \mathcal{C}_M,
~w\in \mathcal{S}_{N_{<,>}} \rbrace,
\]
then we call the function $V_{*}$ an {\rm optimal potential} and the function  $w_{*}$ an {\rm optimal density} for problem \eqref{s-l}.
\end{defi}

\begin{conj}
We consider the weighted Schr\"{o}dinger operators

\begin{equation}\label{eq12}
\left\{
\begin{array}{lr}
H(V,w) u :=  -u^{\prime \prime} +V(x) u  = \lambda  w(x) u
 \\
u(0)=u(\pi)=0 & 
\end{array}
\right.
\end{equation}
For a given bounded measurable, strictly positive weight function $w$, any 
gap-minimizing
optimal potential $V_{*}\in \mathcal{C}_M $ is constant.
\end{conj}

\begin{rem}\label{rem1}
The assumptions of boundedness and measurability ensure that the weighted Schr\"{o}dinger operators $H(V,w)$  is self-adjoint with a discrete spectrum, as discussed in \cite{Wei}.
\end{rem}

{As in \cite{ElAHar}, we will use compactness of the sets $\mathcal{C}_M$ and $\mathcal{S}_{N_{<,>}}$ , following from a Blaschke Selection principle: }

\begin{prop}\label{compact}
For any sequence $f_{n}\in\Lambda$, where $\Lambda=\mathcal{C}_M$ or
respectively $\mathcal{S}_{N_{<,>}}$ with any fixed positive $M, N_{<,>}$, there exist a subsequence $f_{n_{k}}$ and a function $f_{*}$ such that  $f_{n_{k}}(x)\longrightarrow f_{*}(x) \in\Lambda $ for a.e. $x$ with $f_{*} \in \mathcal{C}_M$ or respectively $\mathcal{S}_{N_{<,>}}$.
\end{prop}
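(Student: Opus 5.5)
The plan is to exploit the uniform bounds and the convexity/concavity to get equicontinuity on compact subintervals, and then apply Arzelà–Ascoli with a diagonal argument. We treat $\Lambda=\mathcal{C}_M$ in detail; the case $\mathcal{S}_{N_{<,>}}$ follows by applying the same argument to $-w$, which is convex with $-N_>\le -w\le -N_<$. When $M=\infty$ the class $\mathcal{C}_M$ is not uniformly bounded, so we use $M<\infty$ fixed, as in the statement.

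\textbf{Step 1 (local Lipschitz bound).} Let $f$ be convex on $[0,\pi]$ with $0\le f\le M$, and fix $\delta\in(0,\pi/2)$. For $\delta\le x<y\le \pi-\delta$, monotonicity of difference quotients of a convex function gives
\[
\frac{f(x)-f(0)}{x}\le \frac{f(y)-f(x)}{y-x}\le \frac{f(\pi)-f(y)}{\pi-y},
\]
so $|f(y)-f(x)|\le (M/\delta)|y-x|$. Hence the family $\{f_n\}$ is uniformly bounded and equi-Lipschitz on each $[\delta,\pi-\delta]$.

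\textbf{Step 2 (extraction).} By Arzelà–Ascoli applied on $[1/k,\pi-1/k]$ for $k=2,3,\dots$, together with a diagonal argument, we obtain a subsequence $f_{n_k}$ converging locally uniformly on $(0,\pi)$ to a function $f_*$. Pointwise limits preserve both convexity and the bounds $0\le f_*\le M$. At the two endpoints we pass to a further subsequence along which $f_{n_k}(0)$ and $f_{n_k}(\pi)$ converge. Since they are endpoints, they have measure zero, so a.e. convergence holds in any case. Define $f_*$ at $0$ and $\pi$ as these limits. Convexity survives: for $x=0$ the inequality $f(t\cdot 0+(1-t)y)\le tf(0)+(1-t)f(y)$ passes to the limit. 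Alternatively, one may take any value in $[\lim_{x\to0}f_*(x),M]$, since convexity on a closed interval permits upward jumps at the endpoints. Thus $f_*\in\mathcal{C}_M$ and $f_{n_k}\to f_*$ everywhere on $[0,\pi]$, in particular a.e.

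\textbf{Step 3 (concave class).} For $w_n\in\mathcal{S}_{N_{<,>}}$, apply Steps 1–2 to $M' - w_n$ with $M'=N_>$. These functions are convex and take values in $[0,N_>-N_<]$. The limit is concave with $N_<\le w_*\le N_>$.

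The only delicate point is the lack of equicontinuity near the endpoints, since convex functions can have steep boundary slopes. This is why we work on compact subintervals with the diagonal argument, and treat the endpoints separately as a null set. This is the Blaschke selection principle specialized to one dimension.
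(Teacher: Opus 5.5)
Your argument is correct, but it takes a different route from the paper. The paper gives no proof of its own. It defers to Proposition 2.1 of the cited reference and to the Blaschke selection principle, i.e.\ compactness in the Hausdorff metric of uniformly bounded families of compact convex sets. Presumably this is applied to the closed truncated epigraphs $\{(x,y): 0\le x\le \pi,\ f_n(x)\le y\le M\}$ (respectively the regions under the graphs of the $w_n$), and convergence of the functions is then read off from convergence of the sets.

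You instead derive the explicit interior Lipschitz bound $|f(y)-f(x)|\le (M/\delta)|y-x|$ on $[\delta,\pi-\delta]$ from monotonicity of difference quotients. You then conclude with Arzel\`a--Ascoli, a diagonal subsequence, and a separate extraction at the two endpoints. Your reduction of the concave class to the convex one via $N_>-w_n$ is also fine.

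Your route is elementary and self-contained, and it gives more than is claimed: locally uniform convergence on $(0,\pi)$ and, after the endpoint extraction, convergence at every point of $[0,\pi]$. Convexity and the bounds then pass trivially to the limit. The Blaschke route needs no explicit estimates and treats the class geometrically. However, passing from Hausdorff convergence of sets back to convergence of functions takes a small extra argument, and it naturally yields convergence only at interior points, since the closed epigraphs record only the lower semicontinuous envelope of $f_n$ at $0$ and $\pi$. This is consistent with the ``a.e.''\ in the statement.

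Your restriction to $M<\infty$ is necessary: for $M=\infty$ the constants $f_n\equiv n$ have no convergent subsequence. Your closing description of the argument as ``Blaschke specialized to one dimension'' is loose, since what you wrote is an Arzel\`a--Ascoli argument, but this does not affect correctness.
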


\noindent
{For the proof,  see \cite{ElAHar},
Proposition $2.1$.}

\begin{cor}\label{existence}
There exist a potential $V_{*}\in \mathcal{C}_M $ and a density $w_{*}\in \mathcal{S}_{N_{<,>}}$ that minimize $\Gamma[V,w]$.
\end{cor}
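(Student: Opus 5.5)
The plan is the direct method of the calculus of variations. The set $\mathcal{C}_M\times\mathcal{S}_{N_{<,>}}$ is sequentially compact in the sense of Proposition \ref{compact}, and $\Gamma$ should be continuous with respect to a.e. convergence of uniformly bounded coefficients.

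First I set $\Gamma_*:=\inf\{\Gamma[V,w]: V\in\mathcal{C}_M,\ w\in\mathcal{S}_{N_{<,>}}\}$. This is finite and nonnegative, since $\lambda_2>\lambda_1$. I then choose a minimizing sequence $(V_n,w_n)$ with $\Gamma[V_n,w_n]\to\Gamma_*$. Applying Proposition \ref{compact} first to $V_n$ and then, along the resulting subsequence, to $w_n$, I get a common subsequence (still indexed by $n$) with $V_n\to V_*\in\mathcal{C}_M$ and $w_n\to w_*\in\mathcal{S}_{N_{<,>}}$ almost everywhere. Since $0\le V_n\le M$ and $N_<\le w_n\le N_>$, dominated convergence upgrades this to convergence in $L^p(0,\pi)$ for every $p<\infty$, in particular in $L^1$.

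The main step is continuity of $\lambda_1$ and $\lambda_2$ under this convergence. I would use the min--max characterization
\[
\lambda_k(V,w)=\min_{\substack{E\subset H_0^1(0,\pi)\\ \dim E=k}}\ \max_{u\in E\setminus\{0\}}\frac{\int_0^\pi (u'^2+Vu^2)\,dx}{\int_0^\pi wu^2\,dx}.
\]
For $u\in H_0^1(0,\pi)$ in one dimension we have $\|u\|_\infty\le C\|u'\|_{L^2}$. Hence
\[
\Big|\int (V_n-V_*)u^2\Big|\le \|V_n-V_*\|_{L^1}\,C^2\|u'\|_2^2 ,
\]
and the same estimate holds with $w_n-w_*$. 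Because $w\ge N_{<}>0$ and $V\ge0$, the quadratic forms are uniformly comparable to $\|u'\|_2^2$. So the Rayleigh quotients for $(V_n,w_n)$ and $(V_*,w_*)$ differ by a factor $1+o(1)$ plus an additive $o(1)$, uniformly on $H_0^1\setminus\{0\}$. This gives $\lambda_k(V_n,w_n)\to\lambda_k(V_*,w_*)$ for $k=1,2$. An alternative route is to pass to the limit in the eigenfunctions: they are bounded in $H^2$, hence compact in $C^1$.

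I expect this continuity step to be the only real obstacle. The point to check is that convergence is only almost everywhere, so all bounds must be uniform. Uniform $L^\infty$ bounds together with the Sobolev embedding handle this, and the strict positivity of $N_<$ is essential to keep the denominator of the Rayleigh quotient away from zero.

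With continuity in hand, $\Gamma[V_*,w_*]=\lim_n\Gamma[V_n,w_n]=\Gamma_*$, and $(V_*,w_*)$ lies in the admissible class. So $V_*$ and $w_*$ are an optimal potential and an optimal density.
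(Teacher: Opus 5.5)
Your proposal is correct and follows the same direct-method argument as the paper, which defers to Corollary 3.1 of the earlier single-well paper with Blaschke selection via Proposition \ref{compact} replacing Helly: take a minimizing sequence, extract an a.e.\ convergent subsequence, and pass to the limit in $\lambda_1,\lambda_2$. One small correction: the bound $\bigl|\int_0^\pi (w_n-w_*)u^2\,dx\bigr|\le\epsilon_n\|u'\|_2^2$ is small relative to $\int_0^\pi w_*u^2\,dx$ only when the Rayleigh quotient of $u$ is bounded, so the claimed uniformity on all of $H_0^1\setminus\{0\}$ fails as stated; it does hold for the reciprocal quotient $\int wu^2/\int(u'^2+Vu^2)$, or on the bounded-energy $k$-dimensional subspaces used in min--max, and that is all your argument needs.
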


\begin{proof}  
The argument is identical to the proof of Corollary 3.1 in \cite{AHRA}, except that, as mentioned, one replaces the Helly compactness theorem with the Blaschke selection principle.
\end{proof}  

We can now seek the explicit form of optimal potential  $V_{*}\in \mathcal{C}_M $ of the problem  
{\eqref{s-l}}. We follow the strategy of \cite{AHRA}.

\begin{thm}\label{step}
For a given bounded measurable, strictly positive weight function $w$, and for every convex 
and non-affine potential $V$ (i.e. not of the form $ax+b$), there exists a linear potential $V_{a}=ax$ such that
$$
 \Gamma[V,w] \geq  \Gamma[V_{a},w].$$
Alternatively, for a given  
bounded measurable potential function $V$, and for every concave and not-affine
weight $w$ (i.e. not of the form $mx+p$), there exists an affine weight $w_m(x)=mx+p$ such that 
$$
 \Gamma[V,w] \geq  \Gamma[V,w_m].$$  
\end{thm}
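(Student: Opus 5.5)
We argue by a perturbative (Feynman--Hellmann) argument in the spirit of \cite{AHRA,ElAHar}, together with the sign structure of $u_1^2-u_2^2$ given in Lemma \ref{F-H}.

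\textbf{First statement.} Fix $w$ and a convex, non-affine $V\in\mathcal{C}_M$. Let $u_1,u_2$ be the normalized eigenfunctions, with $\int_0^\pi w\,u_n^2\,dx=1$. Lemma \ref{F-H}(4) provides points $x_-<x_+$ such that $u_1^2>u_2^2$ on $(x_-,x_+)$ and $u_1^2\le u_2^2$ on the complement. Let $L$ be the affine function that interpolates $V$ at the two points $x_\pm$. If one of them lies at an endpoint, interpolate $V$ at the interior point instead, using a supporting line there. By convexity, $V-L\le 0$ on $[x_-,x_+]$ and $V-L\ge 0$ outside it. Hence $V-L$ has the same sign pattern as $u_2^2-u_1^2$, and so
\[
(V-L)(u_2^2-u_1^2)\ge 0\quad\text{a.e.}
\]

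Now consider the convex family $V_\kappa=V+\kappa(L-V)$ for $\kappa\in[0,1]$, and apply Lemma \ref{F-H}(1) with $\partial_\kappa w=0$. This gives
\[
\frac{d\Gamma}{d\kappa}\Big|_{\kappa=0}=\int_0^\pi (L-V)(u_2^2-u_1^2)\,dx\le 0,
\]
so the gap does not increase to first order. To turn this local statement into a global one, I would not differentiate only at $\kappa=0$. Instead, work with the minimizer. By Corollary \ref{existence}, restricted to fixed $w$ via the compactness in Proposition \ref{compact}, there is a minimizer $V_*$ of $\Gamma[\cdot,w]$ over $\mathcal{C}_M$. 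If $V_*$ were non-affine, the inequality above would be strict. Strictness holds because $V_*\ne L$ on a set of positive measure, while $u_1^2\ne u_2^2$ a.e. by Lemma \ref{F-H}(3). This strict decrease contradicts minimality, so the infimum is attained at an affine potential $ax+b$. Since $\Gamma[V+c,w]=\Gamma[V,w]$, we may take $b=0$. Then for every convex non-affine $V$ we get $\Gamma[V,w]\ge\Gamma[V_*,w]=\Gamma[V_a,w]$ with $V_a=ax$.

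\textbf{Second statement.} Fix $V$ and a concave weight $w$, and use the family $w_\kappa=w+\kappa(\ell-w)$. Lemma \ref{F-H}(1) now gives
\[
\frac{d\Gamma}{d\kappa}=\int_0^\pi (\ell-w)(\lambda_1u_1^2-\lambda_2u_2^2)\,dx .
\]
Lemma \ref{F-H}(6) supplies points $\widehat x_\pm$ such that $\lambda_1u_1^2-\lambda_2u_2^2>0$ exactly on $(\widehat x_-,\widehat x_+)$. Take $\ell$ to be the chord of $w$ through $\widehat x_\pm$, or a supporting line if one of them is an endpoint. By concavity, $w\ge\ell$ on $[\widehat x_-,\widehat x_+]$ and $w\le \ell$ outside it. Therefore the integrand $(\ell-w)(\lambda_1u_1^2-\lambda_2u_2^2)$ is $\le 0$ everywhere, and it is strict when $w$ is non-affine. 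The same minimizer argument, using Proposition \ref{compact} for $\mathcal{S}_{N_{<,>}}$, then shows that the minimizer is an affine weight $mx+p$.

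\textbf{Main obstacles.} The first is admissibility: one must check that $\ell$ (respectively $L$) stays in the class, i.e. $N_<\le\ell\le N_>$ and $0\le L\le M$. A chord can leave the band near the endpoints. If that happens, I would truncate the perturbation by taking only small $\kappa$, so that the sign argument applies to the minimizer directly. The second is that the normalization in Lemma \ref{F-H}(1) involves $w$, so the sets where $u_1^2>u_2^2$ must be taken with the weighted normalization. The third is the degenerate case where a crossing point coincides with an endpoint. I expect the strictness and admissibility issues at the minimizer to be the delicate part of the proof.
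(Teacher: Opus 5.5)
You are right not to integrate $d\Gamma/d\theta$ along the segment. The paper does exactly that, but its sign argument uses the crossing points $x_\pm$ of the eigenfunctions of $V$. The derivative at $\theta$, however, involves the eigenfunctions of $\theta V+(1-\theta)L_V$, whose crossing points are different.

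The minimizer argument you put in its place has a genuine gap, at exactly the point you flagged. The perturbation $V_*+\kappa(L-V_*)$ can leave $\mathcal{C}_M$ for \emph{every} $\kappa>0$, so taking $\kappa$ small does not help.
\begin{itemize}
\item Outside $[x_-,x_+]$ the chord lies below $V_*$. If $V_*$ vanishes at a point where $L<0$, positivity fails immediately. For instance, if $V_*=\max(0,k(x-1))$ with $1<x_-<x_+$, then $L=k(x-1)$ and $V_\kappa(0)=-\kappa k$.
\item Adding a constant does not rescue this. When $k(\pi-1)=M$, it pushes the oscillation to $M+\kappa k$.
\item The direction $\max(L,0)-V_*$ does stay in $\mathcal{C}_M$ and has the right sign. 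But it only proves that a minimizer is a ramp $\max(ax+b,0)$, not that it is affine.
\item For weights the same argument gives candidates $\min(\ell,N_>)$, and there the upper constraint is always active. For $V=0$ one has $\Gamma[0,cw]=\Gamma[0,w]/c$, so any minimizer over $\mathcal{S}_{N_{<,>}}$ must touch $N_>$.
\end{itemize}
To finish, you would need to rule out these kinked candidates, or show that the constraints are inactive at a minimizer. Taking $M=\infty$ does not work, because it destroys the compactness that produced $V_*$.

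There are two further errors.
\begin{itemize}
\item In the endpoint case, a supporting line at the interior crossing point satisfies $V\ge\ell$ everywhere (for weights, $w\le\ell$). This gives the integrand the wrong sign on $(x_-,x_+)$. Use the chord through the endpoint value and the interior point instead, as the paper does.
\item The reduction $ax+b\to ax$ relies on $\Gamma[V+c,w]=\Gamma[V,w]$. You took this from the paper, but it is false for non-constant $w$. By Lemma \ref{F-H}(1),
\[
\frac{d}{dc}\Gamma[V+c,w]=\int_0^\pi(u_2^2-u_1^2)\,dx .
\]
Under the normalization $\int_0^\pi w\,u_n^2\,dx=1$ this need not vanish. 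For $V=0$ and $w=1-\epsilon x^2$ it equals $\tfrac38\epsilon+O(\epsilon^2)$. So the final normalization $b=0$ is unjustified as it stands.
\end{itemize}
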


\begin{proof}  

We assume that $w$ is fixed. 
Let $V$ be a convex non-affine potential and $L_{V}(x)=ax+b$ the affine potential 
such that $V(x_{\pm})=L_{V}(x_{\pm})$.

\noindent By Lemma \ref{F-H} there exist $x_{\pm}$: $0\leqslant x_{-}<x_{+}\leqslant \pi$, for which 
\begin{align*}
    &u_{2}^{2}(x)>u_{1}^{2}(x) \textrm{ on } (0,x_{-})\cup (x_{+},\pi)\quad\textrm{(one of these intervals may be vacuous)}\\
    &u_{1}^{2}(x)>u_{2}^{2}(x) \textrm{ on } (x_{-},x_{+}).
\end{align*}
 
\noindent By the convexity of $V$,
\begin{gather*}
 V-L_{V}\geq 0 \quad \text{on } (0,x_{-})\cup (x_{+},\pi), \\
 V-L_{V}\leq 0 \quad \text{on } (x_{-},x_{+}). 
\end{gather*}
Hence 
$$
\int_0^{\pi}(V(x)-L_{V}(x))(u_2^2(x)-u_1^2(x))dx >0.
$$
Let
\[
 L_{V}(\theta)=\theta V+(1-\theta)L_{V}\quad\text{for }\theta\in(0,1).
\]
 Therefore $\dot{L_{V}}(\theta)=V-L_{V}$.
Then by the Feynman-Hellman formula,
$$
\frac{d\Gamma(L_{V}(\theta))}{d\theta}=
\int_0^{\pi}(V(x)-L_{V}(x))(u_2^2(x)-u_1^2(x))dx >0.
$$
Integrating this inequality with respect to $\theta$ over $(0,1)$, we find that
$$
\int_0^{1}\frac{d\Gamma(L_{V}(\theta))}{d\theta}=\Gamma[V,w]-\Gamma[L_{V},w]\geq 0.
$$
Then $$ \Gamma[V,w]\geq \Gamma[L_{V},w]=\Gamma[ax,w].$$

We now prove the second part of the statement by a similar argument applied to $w$ for fixed $V$.
Let $w$ be a concave, not-affine weight, and $w_{m}=mx+p$ be the affine weight such that $w(\widehat{x}_{\pm})=w_{m}(\widehat{x}_{\pm})$.

Applying Lemma \ref{F-H} once again, we find that 
there exist $\widehat{x}_{\pm}$: $0\leqslant \widehat{x}_{-}<\widehat{x}_{+}\leqslant \pi$, satisfying 
\begin{align*}
    &\lambda_{2} u_{2}^{2}(x)>\lambda_{1} u_{1}^{2}(x) \textrm{ on } (0,\widehat{x}_{-})\cup (\widehat{x}_{+},\pi)\\
    &\lambda_{1}u_{1}^{2}(x)>\lambda_{2}u_{2}^{2}(x) \textrm{ on } (\widehat{x}_{-},\widehat{x}_{+}).
\end{align*}
\noindent By the concavity of $w$  
  $$\displaystyle  \left\{
    \begin{array}{ll}
   
   w-w_{m} \leqslant  0 \qquad\text{on}~~(0,\widehat{x}_{-})\cup (\widehat{x}_{+},\pi),
   \\
    w-w_{m}\geqslant 0 \qquad\text{on}~~(\widehat{x}_{-},\widehat{x}_{+}).                                                                                      \end{array}                                                                                \right.$$
Therefore
$$
\int_{0}^{\pi}(w-w_{m})(\lambda_2 u_{2}^{2}-\lambda_1 u_{1}^{2})dx <0.
$$ Let 
 $$ L_w(\theta)=\theta w+(1-\theta)w_{m} , ~~~ \theta\in(0,1).$$\\
Consequently
 $$\dot{L_{w}}(\theta)=w-w_{m}.  $$
 Then by the Feynman-Hellman formula,
$$\frac{d\Gamma(L_w(\theta))}{d\theta}=(\lambda_1-\lambda_2)
\int_{0}^{\pi}(w-w_{m})(\lambda_2 u_{2}^{2}-\lambda_1 u_{1}^{2})dx >0.
$$

\noindent Integrating this inequality with respect to $\theta$ over $(0,1)$, we find

$$
\int_{0}^{1}\dfrac{d\Gamma(L_w(\theta))}{d\theta}=\Gamma[V,w]-\Gamma[V,w_{m}]\geqslant 0.
$$
Consequently
$$
\Gamma[V,w]\geqslant \Gamma[V,w_m].
$$

\end{proof}

 In the final part of this section, the aim is to analyze the special case of the optimal potential of the form $V(x)=ax$ with $a\geq0.$ 

\noindent  In the context of single-well potentials and single-barrier weights, it was shown in \cite{AHRA} that optimizers have the following
characterization:
\begin{enumerate}
\item
$V_{*}(x) = 0$ a.e. on a
connected component of $\{x: u_2^2(x) > u_1^2(x)\}$, where
$u_{1,2}$ are the eigenfunctions associated with 
$\lambda_{1,2}$, while on the complement of that interval, 
$V_{*}(x) = M$ 
a.e.  
\item
$w_{*}(x) = N_>$ a.e. on a
connected component of $\{x: \lambda_2 u_2^2(x) > \lambda_1 u_1^2(x)\}$ and on the complement of that interval there exists a constant $\mathcal{M}$ such that $w_{*}(x) = \mathcal{M}$ where $N_< \leq \mathcal{M}<N_>$. 
\end{enumerate}
If $w(x)=1$, Lavine’s \cite{Lavine} result can be reformulated equivalently for weighted Schr\"{o}dinger operators as follows: $$
\Gamma[V,1]\geqslant \Gamma[0,1].
$$

\begin{lemma}\label{lemm}
    Let the first two normalized eigenfunctions $u_{1,2}(x)$ associated with the first eigenvalues $\lambda_{1,2}$ of problem (\ref{s-l}). Then
    \[
\int_0^{\pi}x^2(u_2^2(x) -u^2_1(x))dx\leq\frac{2\pi^2}{\min_{x} w(x)}.
\]

\end{lemma}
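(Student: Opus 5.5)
The plan is to use a direct size estimate that rests only on the normalization of the eigenfunctions and the fact that $0\le x\le\pi$. It needs no fine information about where $u_1^2$ and $u_2^2$ cross. I read ``normalized'' in the natural sense for the weighted problem \eqref{s-l}, namely the $L^2(w\,dx)$ normalization
\[
\int_0^{\pi} w(x)\,u_n^2(x)\,dx=1,\qquad n=1,2,
\]
which is the normalization under which the Feynman--Hellmann formula of Lemma \ref{F-H} takes the stated form. The first step converts this into an unweighted $L^2$ bound. Since $w\ge \min_x w>0$,
\[
\min_x w(x)\int_0^{\pi}u_n^2\,dx\le\int_0^{\pi}w\,u_n^2\,dx=1,
\qquad\text{so}\qquad
\int_0^{\pi}u_n^2\,dx\le\frac{1}{\min_x w(x)} .
\]

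The second step bounds the integrand pointwise using $0\le x^2\le \pi^2$ on $[0,\pi]$. This gives $|x^2(u_2^2-u_1^2)|\le \pi^2(u_1^2+u_2^2)$. Integrating and applying the first step yields
\[
\int_0^{\pi}x^2\bigl(u_2^2-u_1^2\bigr)\,dx\le \pi^2\Bigl(\int_0^{\pi}u_1^2\,dx+\int_0^{\pi}u_2^2\,dx\Bigr)\le\frac{2\pi^2}{\min_x w(x)},
\]
which is the claim. In fact, dropping the nonpositive term $-\int x^2u_1^2$ already gives the sharper bound $\pi^2/\min_x w$. I would state the cruder form because it controls the absolute value, and that two-sided control is presumably what is needed when the lemma is used later, for instance in the Feynman--Hellmann derivative in the direction $V=ax$.

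The only delicate point is the normalization convention, and this is where I expect the main obstacle. If one instead normalized by $\int_0^\pi u_n^2=1$, the same argument would give the weight-free bound $2\pi^2$. That bound still implies the stated one when $\min_x w\le 1$, but not in general. So in writing the proof I would state explicitly at the outset that $u_{1,2}$ are normalized in $L^2(w\,dx)$, consistently with Lemma \ref{F-H}. With that convention fixed, everything else is an elementary estimate.
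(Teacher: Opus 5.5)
Your proposal is correct and matches the paper's proof essentially step for step. The paper also uses the weighted normalization $\int_0^\pi w\,u_n^2\,dx=1$ to get $\int_0^\pi u_n^2\,dx\le 1/\min_x w$, then bounds $|u_2^2-u_1^2|\le u_1^2+u_2^2$ and uses $x^2\le\pi^2$. Your side remark that dropping $-\int_0^\pi x^2u_1^2\,dx$ already gives $\pi^2/\min_x w$ is also valid, and in fact only the one-sided bound is used later in Theorem~\ref{step2}.
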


\begin{proof}
\noindent Let $w \in  \mathcal{S}_{N_{<,>}}$, since $w$ is concave on the compact interval $[0,\pi]$, it follows that $w$ is continuous and hence attains its minimum. In addition, the strict positivity of $w$ yields \[ \min_{x\in [0,\pi]} w(x)>0. \]
\noindent  Let the first two normalized eigenfunctions $u_{1,2}(x)$ associated with the first eigenvalues $\lambda_{1,2}$ of problem (\ref{s-l}), 
we have $\int_0^\pi w(x)u_n^2(x)dx=1$. Hence 
\[
\int_0^{\pi}u_n^2(x)dx\leq
\frac{1}{\min_{x} w(x)}\int_0^{\pi}u_n^2(x)w(x
)dx=\frac{1}{\min_{x} w(x)}.
\]
Therefore 

\[
\int_0^{\pi}\mid u_2^2(x)-u_1^2(x)\mid dx\leq
\int_0^{\pi}( u_2^2(x)+u_1^2(x)) dx\leq
\frac{2}{\min_{x} w(x)}.
\]
Since 
\[
\left| \int_0^{\pi}x^2 (u_2^2(x)-u_1^2(x))dx \right| \leq
\frac{2\pi^2}{\min_{x} w(x)}.
\]
Then
    \[
\int_0^{\pi}x^2(u_2^2(x) -u^2_1(x))dx\leq\frac{2\pi^2}{\min_{x} w(x)}.
\]
    
\end{proof}
\begin{rem}\label{rem12}
    In particular, for the affine weight $w(x)=mx+p$, $m>0$, in the interval $[0,\pi]$, the function $w$ increases since $w^{\prime}(x)=m>0$.
Therefore $\min_{x} w(x)=w(0)=p.$ Then

\[
\int_0^{\pi}x^2(u_2^2(x) -u^2_1(x))dx\leq\frac{2\pi^2}{p}.
\]
\end{rem}

\begin{thm}\label{step2}
Let $w(x)=mx+p$ be a fixed positive affine weight function with $m>0$. Assume that
\[
p>5m\pi
\]

\noindent Then any minimizer of the fundamental gap
\[
\Gamma[V,w] = \lambda_2(V,w) - \lambda_1(V,w)
\]
over the class of convex potentials $ \mathcal{C}_M$ must be constant.

    
\end{thm}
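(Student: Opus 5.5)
By Theorem \ref{step}, any convex non-affine potential can be replaced by an affine one without increasing the gap. Since $\Gamma[V+c,w]=\Gamma[V,w]$, I will reduce to the one-parameter family $V_a(x)=ax$. I first treat $a\ge 0$, then handle decreasing affine potentials. The goal is to show that $a\mapsto \Gamma[ax,w]$ is strictly increasing for $a>0$. Then the unique minimizer within this family is $a=0$, i.e.\ a constant potential.

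By Lemma \ref{F-H}(1), with $\partial_a V_a = x$ and $w$ independent of $a$,
\[
\frac{d\Gamma[ax,w]}{da}=\int_0^\pi x\,(u_2^2-u_1^2)\,dx ,
\]
so everything reduces to showing $\int_0^\pi x(u_2^2-u_1^2)\,dx>0$. To obtain this I will use Lemma \ref{G'''} with the polynomial test function $G(x)=x^2$, applied to $u_1$ and to $u_2$, and subtract the two identities.

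\textbf{Step 1: evaluate the identity.} With $G=x^2$ we have $G'=2x$, $G''=2$, $G'''=0$. The Dirichlet conditions kill the $u^2$ boundary terms, leaving the boundary contribution $\pi^2u'^2(\pi)$. The interior integrand is
\[
4x(\lambda w-V)+x^2(\lambda m-a).
\]

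\textbf{Step 2: isolate the target integral.} Substituting $w=mx+p$ and $V=ax$, the interior integrand becomes
\[
4x\lambda p+x^2(5\lambda m-5a).
\]
Subtracting the identity for $u_1$ from that for $u_2$ then expresses $4p\int x(\lambda_2u_2^2-\lambda_1u_1^2)$ in terms of three pieces: the boundary terms $\pi^2(u_2'^2(\pi)-u_1'^2(\pi))$, an $x^2$-moment weighted by $5m\lambda_n$, and $-5a\int x^2(u_2^2-u_1^2)$.

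\textbf{Step 3: bound the pieces.} The $x^2$-moment terms will be bounded using Lemma \ref{lemm} and Remark \ref{rem12}, i.e.\ $|\int x^2(u_2^2-u_1^2)|\le 2\pi^2/p$, together with $\int u_n^2\le 1/p$.

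\textbf{Main obstacle: passing to the unweighted difference.} The delicate point is converting the information on $\int x(\lambda_2 u_2^2-\lambda_1u_1^2)$ into the sign of the unweighted difference $\int x(u_2^2-u_1^2)$, while also controlling the boundary derivative terms. For the derivatives I will use the normalization $\int w u_n^2=1$ and the ordering from Lemma \ref{F-H}(2),(4): $u_2/u_1$ is decreasing, which gives $u_2^2>u_1^2$ near $\pi$ and fixes the sign of $u_2'^2(\pi)-u_1'^2(\pi)$. I will then write $\lambda_2u_2^2-\lambda_1u_1^2=\lambda_1(u_2^2-u_1^2)+\Gamma u_2^2$. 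The condition $p>5m\pi$ is exactly what should make the $5m\lambda$ moment error, of size roughly $5m\pi\cdot\pi^2/p$ relative to the $4p$ coefficient, smaller than the positive leading term, yielding $d\Gamma/da>0$.

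\textbf{Case $a<0$.} I expect to handle this by the reflection $x\mapsto\pi-x$, which turns $V_a$ into an increasing affine potential but turns $w$ into a decreasing weight. This reflected case needs a symmetric version of the same estimate. If that fails, the fallback is to show the derivative changes sign only at $a=0$, using continuity in $a$ and the same inequality evaluated at $a=0$. This sign bookkeeping is where I expect most of the real work.
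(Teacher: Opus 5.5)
\textbf{Your approach cannot work, because the statement is false.} The monotonicity you set out to prove fails near $a=0$, and with it the theorem. At $a=0$ the eigenfunction $u_k$ satisfies $-u_k''-\lambda_k m\,x\,u_k=\lambda_k p\,u_k$. So it is the $k$-th Dirichlet eigenfunction of $-\frac{d^2}{dx^2}+bx$ with $b=b_k=-\lambda_k m=-k^2\frac{m}{p}+O\big(\frac{m^2}{p^2}\big)$. For this family one has $\mu_k(b)=k^2+\frac{\pi}{2}b+\frac{\pi^2k^2-15}{48k^4}b^2+O(b^3)$. This is the classical second-order Stark shift in a box; you can recover the coefficient by expanding the $G=x$ and $G=x^2$ identities of Lemma \ref{G'''} to first order in $b$ at $m=0$. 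By Hellmann--Feynman, the centre of mass $\langle x\rangle_k:=\int_0^\pi xu_k^2\,dx\big/\int_0^\pi u_k^2\,dx$ equals $\mu_k'(b_k)$, so
\[
\langle x\rangle_k=\frac{\pi}{2}-\frac{\pi^2k^2-15}{24k^2}\,\frac{m}{p}+O\Big(\frac{m^2}{p^2}\Big),\qquad \langle x\rangle_2-\langle x\rangle_1=-\frac{15}{32}\,\frac{m}{p}+O\Big(\frac{m^2}{p^2}\Big).
\]
Under $\int_0^\pi wu_k^2\,dx=1$ you have $\int_0^\pi xu_k^2\,dx=\langle x\rangle_k/(p+m\langle x\rangle_k)$, which is increasing in $\langle x\rangle_k$. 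Hence
\[
\frac{d}{da}\Gamma[ax,w]\Big|_{a=0}=\int_0^\pi x(u_2^2-u_1^2)\,dx=-\frac{15m}{32p^2}+O\Big(\frac{m^2}{p^3}\Big)<0
\]
for small $m/p$, a regime well inside $p>5m\pi$. So $\Gamma[ax,w]<\Gamma[0,w]$ for small $a>0$. Since $ax\in\mathcal{C}_M$ for $0\le a\le M/\pi$, the constant potential is not a minimizer. The mechanism is visible if you rewrite the equation as $-u''+(a-\lambda m)xu=\lambda p\,u$: each level feels its own slope $a-\lambda_k m$, and the gap is smallest when these slopes are balanced. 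To second order this happens at $a^*\approx\frac{180}{225-12\pi^2}\,\frac{m}{p}\approx 1.69\,\frac{m}{p}$, not at $a=0$. Consequently no bookkeeping of boundary terms and moments can give $d\Gamma/da>0$ on $(0,\infty)$. Your fallback for $a<0$ (a sign change of the derivative exactly at $a=0$) also fails, since that derivative is nonzero.

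Even apart from this, your individual steps would not close:
\begin{itemize}
\item Lemma \ref{F-H}(4) allows $x_+=\pi$, so the monotonicity of $u_2/u_1$ does not fix the sign of $u_2'(\pi)^2-u_1'(\pi)^2$.
\item The splitting $\lambda_2u_2^2-\lambda_1u_1^2=\lambda_1(u_2^2-u_1^2)+\Gamma u_2^2$ leaves the positive term $\Gamma\int_0^\pi xu_2^2\,dx$ on the wrong side.
\item You never identify a positive leading term against which the $5m\lambda_n$ moment error could be compared.
\end{itemize}
The paper argues differently. It supposes a critical point at some $a>0$, combines the $G=x$ and $G=x^2$ identities to eliminate $u_n'(\pi)^2$, and signs $\int_0^\pi x^2(u_2^2-u_1^2)\,dx$ with the quadratic $\phi$. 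This avoids your boundary-term problem. However, its displayed identity $\frac{2\pi}{5}(\lambda_2-\lambda_1)=(m(\lambda_2-\lambda_1)-a)\int_0^\pi x^2(u_2^2-u_1^2)\,dx$ is an algebra slip. At $m=0$, $a=0$, where the critical-point condition holds trivially, it would read $\frac{6\pi}{5p}=0$. Done correctly with $\int_0^\pi wu_n^2\,dx=1$, the elimination gives
\[
2\pi\lambda_n=(4p-\pi m)\lambda_n\int_0^\pi xu_n^2\,dx+3\pi a\int_0^\pi xu_n^2\,dx+5(\lambda_n m-a)\int_0^\pi x^2u_n^2\,dx.
\]
Its difference over $n=1,2$ contains the sign-indefinite factor $2\pi-(4p-\pi m)\int_0^\pi xu_1^2\,dx-5m\int_0^\pi x^2u_1^2\,dx$ and yields no contradiction. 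This is consistent with the genuine critical point near $a^*$. So following the paper's route would not rescue the argument either.
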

\begin{proof}  
By Theorem \ref{step}, for any convex potential $V$, there exists an affine potential
\[
V_a(x) = ax 
\]
such that
\[
\Gamma[V,w] \geq \Gamma[V_a,w].
\]


\noindent Let the first two normalized eigenfunctions $u_{1,2}(x)$ associated with the first eigenvalues $\lambda_{1,2}$ of problem (\ref{s-l}). 

\noindent We consider a family of potentials $V^{\theta}=\theta ax$. 
 From Lemma \ref{F-H}, we have

\begin{align*}
&\frac{d(\lambda_2(V^{\theta})-\lambda_1(V^{\theta}))}{d\theta}\\
&=\int_0^{\pi} \frac{\partial V^{\theta}}{\partial \theta}[u_2^2(x)-u_1^2(x)]dx\\
 &=a\int_0^{\pi}x[u_2^2(x)-u_1^2(x)]dx.
\end{align*}

\noindent Suppose that a critical point of the gap occurs
at some $a>0$, then
$$
\int_0^{\pi}x(u_2^2(x)-u_1^2(x))dx =0.
$$
\noindent Applying Lemma \ref{G'''} for $G(x)=x$, we have
\begin{align*}
&\pi[(u_n^{\prime}(\pi))^{2}+(\lambda_n w(\pi)-a\pi )u_n^{2}(\pi)]\\
 &=\int_0^\pi [2(\lambda_n w-ax)+x(\lambda_n w^{\prime}-a)]u_n^2
(x)dx.
\end{align*}

\noindent Since $u_n$ satisfies the Dirichlet boundary conditions $u_n(0)=u_n(\pi)=0$, 

$$\pi(u_n^{\prime}(\pi))^{2}
 =\int_0^\pi [2(\lambda_n w-ax)+x(\lambda_n w^{\prime}-a)]u_n^2
(x)dx.
$$ 

\noindent Similarly for $G(x)=x^2$ and from the Dirichlet boundary conditions, we find that
$$\pi^2(u_n^{\prime}(\pi))^{2}\\
=\int_0^\pi [4x(\lambda_n w-ax)+x^2(\lambda_n w^{\prime}-a)]u_n^2(x)dx.$$
\noindent Consequently
\begin{align*}
&\int_0^\pi [2(\lambda_1 w-ax)+x(\lambda_1 w^{\prime}-a)]u_1^2
(x)dx\\
&=\frac{1}{\pi}\int_0^\pi [4x(\lambda_1 w-ax)+x^2(\lambda_1 w^{\prime}-a)]u_1^2(x)dx
\end{align*}
\noindent and
\begin{align*}
&\int_0^\pi [2(\lambda_2 w-ax)+x(\lambda_2 w^{\prime}-a)]u_2^2
(x)dx\\
&=\frac{1}{\pi}\int_0^\pi [4x(\lambda_2 w-ax)+x^2(\lambda_2 w^{\prime}-a)]u_2^2(x)dx
\end{align*}

\noindent  Subtracting the two identities corresponding to $n=1$ and $n=2$, and using $
\int_0^{\pi}x(u_2^2(x)-u_1^2(x))dx =0$, we obtain

\begin{align*}
&  
\frac{2\pi}{5}(\lambda_2-\lambda_1)\\
&=\left(m(\lambda_2-\lambda_1) -a\right)\int_0^{\pi}x^2(u_2^2(x) -u^2_1(x))dx.\\
\end{align*}

\noindent Hence
\begin{align*}
&  
(\lambda_2-\lambda_1)  \left(\frac{2\pi}{5}-m\int_0^{\pi}x^2(u_2^2(x) -u^2_1(x))dx\right)\\
&=-a\int_0^{\pi}x^2(u_2^2(x) -u^2_1(x))dx.\\
\end{align*}

\noindent Let
\[
\phi(x) = x^2 - Ax - B
\]
\noindent chosen such that
\[
\phi(x_-) = \phi(x_+) = 0.
\]

\noindent Then

$$\displaystyle  \left\{
    \begin{array}{ll}
   
   \phi(x) \leqslant  0 \qquad\text{on}~~(x_-,x_+)^c,
   \\
   \phi(x)\geqslant 0 \qquad\text{on}~~(x_-,x_+).                                                                                      \end{array}                                                                                \right.$$

\noindent We obtain
\[
\int_0^\pi x^2 (u_2^2 - u_1^2)\,dx
= \int_0^\pi \phi(x)(u_2^2 - u_1^2)\,dx
+ A \int_0^\pi x(u_2^2 - u_1^2)\,dx
+ B \int_0^\pi (u_2^2 - u_1^2)\,dx>0.
\]

\noindent By Remark \ref{rem12} \[
\int_0^{\pi}x^2(u_2^2(x) -u^2_1(x))dx\leq\frac{2\pi^2}{p}.
\]
It follows that \[\frac{2\pi}{5m}
\leq 
 \frac{2\pi^2}{p}.
\]
But the assumption
\[
p>5m\pi
\]
which yields a contradiction, since the left-hand side is positive. Then the gap-minimizing
optimal potential $V_{*} \in \mathcal{C}_M $ is constant.
\end{proof}  




\subsection{Discussion and Open Problems}

In the present paper, we extended several classical results concerning the minimization of the fundamental spectral gap for Schr\"odinger operators to the weighted setting. More precisely, we proved that every minimizing convex potential can be reduced to an affine potential, while every minimizing concave weight can be reduced to an affine weight. Furthermore, under the quantitative condition
\[
p>5m\pi,
\]
for affine weights of the form
\[
w(x)=mx+p,
\]
we established that any minimizing convex potential must necessarily be constant.

These results indicate that the rigidity phenomenon discovered by Lavine in the unweighted case persists, at least partially, in the weighted framework. Nevertheless, several important questions remain open.

First, it is not clear whether the condition
\[
p>5m\pi
\]
is optimal or merely technical. The proof relies on an integral estimate involving the first two eigenfunctions, and it is conceivable that sharper estimates may improve this condition or possibly remove it altogether. This naturally leads to the following question:

\begin{opn}    
Let $w$ be a positive concave weight on $[0,\pi]$. Does the constant potential minimzer the fundamental spectral gap over the class of convex potentials. Equivalently, 
$$ \Gamma[V,w]\geq \Gamma[0,w]
$$
for every convex potential $V$.
\end{opn}

\subsection*{Acknowledgements} The author would like to thank Joachim Kerner (FernUniversität in Hagen) and James B. Kennedy (Universidade de Aveiro) for reading the manuscript and useful remarks.

\subsection*{Funding} This research was not supported by any sponsor or funder.






\end{document}